\newtheorem{theorem}{Theorem}[section]
\newtheorem{lemma}[theorem]{Lemma}
\newtheorem{conjecture}[theorem]{Conjecture}
\theoremstyle{definition}
\newtheorem{definition}[theorem]{Definition}
\theoremstyle{remark}
\newtheorem{remark}[theorem]{Remark}
\numberwithin{equation}{section}
\newcommand{\Hh}{\mathbb{H}}
\newcommand{\D}{\mathbb{D}}
\newcommand{\B}{\mathbb{B}}
\newcommand{\C}{\mathbb{C}}
\newcommand{\rea}{\operatorname{Re}}
\newcommand{\Arg}{\operatorname{Arg}}
\begin{document}

\title{On convergence to the Denjoy-Wolff point in the parabolic case}


\author{Olena Ostapyuk}
\curraddr{Department of Mathematics,
University of Northern Iowa,
Cedar Falls, Iowa 50614-0506}
\email{ostapyuk@math.uni.edu}


\subjclass[2010]{Primary 30D05; Secondary 32H50}

\date{12/24/2011}

\begin{abstract}
Based on dynamical behavior, all self-maps of the unit disk in the complex plane can be classified as elliptic, hyperbolic or parabolic. The parabolic case is the most complicated one and branches into two subcases - zero-step and non-zero-step cases. In several dimensions, zero-step and non-zero step cases can be defined for sequences of forward iterates, but it is not known yet if the classification can be extended to parabolic maps of the ball. However, some geometric properties of the forward iterates can be generalized to higher-dimensional case. 
\end{abstract}

\maketitle


\section{Introduction}

Consider an analytic self-map of the open unit disk $\D$ in the complex plane, i.e. the function $f$ such that $f(\D)\subseteq\D$. The classical Schwarz's lemma says, that if $f(0)=0$, then 
\[
|f(z)|\leq|z|\hspace{.5in}{and}\hspace{.5in}|f^{\prime}(0)|\leq 1,
\]
and if equality holds for a point $z\neq 0$, then $f(z)=cz$ with $|c|=1$ (rotation around the center). In other words, unless $f$ is a rotation, the (Euclidean) distance between images of $z$ and $0$ is smaller then the distance between $z$ and $0$.

A similar statement about any two points in the unit disk holds, if we replace Euclidean distance by pseudo-hyperbolic distance:
\begin{theorem}[Point-invariant form of Schwarz's lemma]\
For any analytic self-map $f$ of the unit disk and any $z,w\in\D$,
\[
\left|\frac{f(z)-f(w)}{1-f(z)\overline{f(w)}}\right|\leq\left|\frac{z-w}{1-z\overline{w}}\right|,
\]
and equality holds for some distinct pair iff $f$ is an automorphism of $\D$;
i.e. $f$ is contraction in the pseudo-hyperbolic metric $d(z,w)=\left|\frac{z-w}{1-z\overline{w}}\right|$:
\begin{align}
d(f(z),f(w))\leq d(z,w).\label{SL}
\end{align}
\end{theorem}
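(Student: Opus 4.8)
The plan is to reduce the statement to the classical Schwarz lemma already quoted, by pre- and post-composing $f$ with disk automorphisms that move the relevant points to the origin. For $a\in\D$ write $\phi_a(z)=\frac{a-z}{1-\overline{a}z}$; a standard computation (the boundary values have modulus $1$, then apply the maximum principle) shows $\phi_a$ is an analytic automorphism of $\D$, it equals its own inverse, and it interchanges $0$ and $a$. The first thing I would record is the algebraic identity $|\phi_a(z)|=d(z,a)$ for all $z,a\in\D$, and more generally the invariance $d(\phi_a(z),\phi_a(w))=d(z,w)$; both follow by direct manipulation of the defining quotient, being careful with conjugations.

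Next, fix $z,w\in\D$ and put $b=f(w)\in\D$. Consider $g=\phi_b\circ f\circ\phi_w\colon\D\to\D$, which is analytic with $g(0)=\phi_b(f(\phi_w(0)))=\phi_b(f(w))=\phi_b(b)=0$. The classical Schwarz lemma then yields $|g(\zeta)|\le|\zeta|$ for all $\zeta\in\D$. Evaluating at $\zeta=\phi_w(z)$ and using that $\phi_w$ is an involution gives $g(\phi_w(z))=\phi_b(f(z))$, hence $|\phi_b(f(z))|\le|\phi_w(z)|$. By the identities above the left-hand side is $d(f(z),f(w))$ and the right-hand side is $d(z,w)$, which is precisely \eqref{SL} and the inequality in the statement.

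For the equality clause, suppose $d(f(z),f(w))=d(z,w)$ for some distinct $z,w$. Then $|g(\zeta)|=|\zeta|$ at the nonzero point $\zeta=\phi_w(z)$, so the equality case of the classical Schwarz lemma forces $g(\zeta)=c\zeta$ with $|c|=1$; in particular $g$ is an automorphism of $\D$, and therefore so is $f=\phi_b\circ g\circ\phi_w$ as a composition of automorphisms. Conversely, if $f$ is an automorphism then $g$ is an automorphism of $\D$ fixing $0$, hence a rotation, so $|g(\zeta)|=|\zeta|$ for every $\zeta$ and equality holds for all pairs, in particular for some distinct one.

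I expect the only real difficulty to be bookkeeping rather than ideas: one must confirm that $|w|<1$ and $|b|=|f(w)|<1$ so that $\phi_w$ and $\phi_b$ are genuine automorphisms of $\D$ (immediate since $f(\D)\subseteq\D$), and one must carry out the two identity computations $|\phi_a(z)|=d(z,a)$ and its automorphism-invariant strengthening without sign or conjugation slips. Beyond that, nothing is needed that is not already contained in the classical lemma.
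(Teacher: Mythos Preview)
Your argument is the standard Schwarz--Pick reduction and is correct in every detail, including the equality clause. Note, however, that the paper does not supply a proof of this theorem: it is quoted in the introduction as a classical background result, so there is no ``paper's own proof'' to compare against. What you have written is exactly the textbook proof one would expect to fill that gap.
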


Denote $f_n=f^{\circ n}$ and consider the sequence of forward iterates of $f$, i.e. $z_n=f_n(z_0)$. By Schwarz's lemma, the sequence $d(z_n,z_{n+1})$ is non-increasing; moreover, as the following theorem states, except for the case of an elliptic automorphism, all forward iteration sequences must converge to the same point in the closed disk:

\begin{theorem}[Denjoy-Wolff \cite{Wolff1}, \cite{Wolff2}, \cite{Wolff3} and \cite{Denj}] 
If $f$ is not an elliptic automorphism, then there exists a unique point $p\in\overline{\D}$ (called the Denjoy-Wolff point of $f$) such that the sequence of iterates $\{f_n\}$ converges to $p$ uniformly on compact subsets of $\D$.
\end{theorem}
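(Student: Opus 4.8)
The plan is to treat $\{f_n\}$ as a normal family (Montel's theorem applies since every iterate maps $\D$ into $\D$, hence the iterates are uniformly bounded) and to identify all of its locally uniform subsequential limits; once we show they all coincide with a single point $p\in\overline{\D}$, uniqueness of $p$ is automatic and the theorem follows. I would first dispose of the automorphism case: a non-elliptic automorphism has no fixed point in $\D$, and its fixed point set lies on $\partial\D$ (from the quadratic fixed-point equation of its M\"obius extension), so conjugating to the upper half plane one may assume $f(w)=\la w$ with $\la>0$, $\la\neq1$, or $f(w)=w+1$; in either model $f_n$ converges locally uniformly to a boundary fixed point. From now on assume $f$ is not an automorphism, so that \eqref{SL} is \emph{strict} for every pair $z\neq w$. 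Under this assumption every locally uniform subsequential limit $g$ of $\{f_n\}$ is constant: fixing $z\neq w$, the sequence $d_n:=d(f_n(z),f_n(w))$ is non-increasing, hence $d_n-d_{n+1}\to0$; if $g$ were non-constant then by the open mapping theorem $g(\D)\subseteq\D$ with $g(z)\neq g(w)$, so along the relevant indices the points $f_{n_k}(z),f_{n_k}(w)$ stay in a compact subset of $\D$ with $d(f_{n_k}(z),f_{n_k}(w))$ bounded below, and since $(u,v)\mapsto d(u,v)-d(f(u),f(v))$ is continuous and strictly positive off the diagonal it would be $\ge\ep>0$ there, giving $d_{n_k}-d_{n_k+1}\ge\ep$, a contradiction.

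Next I would handle the case where $f$ has a fixed point $q\in\D$. Conjugating by an automorphism of $\D$ we may take $q=0$; then $f(0)=0$ and, $f$ not being a rotation, Schwarz's lemma gives $|f(z)/z|<1$ on $\D$, so by the maximum principle $\max_{|z|\le r}|f(z)/z|=:c_r<1$ for each $r<1$. Hence $f$ maps $\{|z|\le r\}$ into $\{|z|\le c_r r\}$, and iterating, $f_n$ maps it into $\{|z|\le c_r^{\,n}r\}$; thus $f_n\to0$ uniformly on each closed subdisk, and $p=q$.

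The remaining (and main) case is $f$ with no fixed point in $\D$. Its heart is Wolff's lemma: there is $p\in\partial\D$ such that $f$ maps every horodisk $E(p,R):=\{z\in\D:|p-z|^2<R(1-|z|^2)\}$ into itself. To produce $p$, pick $r_n\uparrow1$; each $r_nf$ maps $\overline{\D}$ into a compact subset of $\D$, so by Brouwer's fixed point theorem it has a fixed point $w_n\in\D$, i.e. $r_nf(w_n)=w_n$. A subsequence of $\{w_n\}$ converges to some $p\in\overline{\D}$; if $p\in\D$ we would get $f(p)=\lim f(w_n)=\lim w_n/r_n=p$, contradicting the hypothesis, so $p\in\partial\D$. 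Applying the Schwarz--Pick inequality to the self-map $r_nf$, which fixes $w_n$, and rewriting it via $1-d(a,b)^2=\frac{(1-|a|^2)(1-|b|^2)}{|1-\bar ab|^2}$, one obtains after cancelling $1-|w_n|^2$ the inequality $\frac{1-|r_nf(z)|^2}{|1-\bar w_n\,r_nf(z)|^2}\ge\frac{1-|z|^2}{|1-\bar w_n z|^2}$; letting $n\to\infty$ along the subsequence (and using $|1-\bar p\,\zeta|=|p-\zeta|$) yields $\frac{|p-f(z)|^2}{1-|f(z)|^2}\le\frac{|p-z|^2}{1-|z|^2}$, which is exactly $f(E(p,R))\subseteq E(p,R)$, hence $f_n(E(p,R))\subseteq E(p,R)$ for all $n$.

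To conclude, observe that a constant subsequential limit $c$ of $\{f_n\}$ cannot lie in $\D$: if $f_{n_k}\to c\in\D$ locally uniformly then $f_{n_k+1}=f\circ f_{n_k}\to f(c)$ while also $f_{n_k+1}=f_{n_k}\circ f\to c$, forcing $f(c)=c$, against the hypothesis; so every constant subsequential limit lies on $\partial\D$. Now fix $z_0\in\D$ and choose $R_0$ with $z_0\in E(p,R_0)$; the whole orbit $z_n=f_n(z_0)$ stays in $\overline{E(p,R_0)}$. Any subsequential limit of $\{z_n\}$ is, by normality together with the two facts just established, a boundary point lying in $\overline{E(p,R_0)}\cap\partial\D=\{p\}$; since $\overline{\D}$ is compact this forces $z_n\to p$. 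Finally, every locally uniform subsequential limit of $\{f_n\}$ is a constant whose value at $z_0$ must be $\lim z_n=p$, so it is identically $p$, and normality then gives $f_n\to p$ locally uniformly on $\D$. The step I expect to be the real obstacle is Wolff's lemma --- extracting the correct boundary point $p$ and passing the Schwarz--Pick inequality to the limit to obtain the invariant family of horodisks; the remainder is routine normal-families bookkeeping.
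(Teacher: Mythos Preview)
The paper does not supply a proof of this theorem; it is quoted as a classical result with references to Wolff and Denjoy, so there is no ``paper's own proof'' to compare against. Your argument is the standard modern proof: normal families via Montel, the interior-fixed-point case by Schwarz, and the boundary case via Wolff's lemma (producing $p$ as a limit of Brouwer fixed points of $r_nf$ and passing the Schwarz--Pick inequality to the limit to obtain the invariant horodisks). It is correct and complete in outline.

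One small slip worth tightening: in the step showing every subsequential limit $g$ is constant, you fix $z\neq w$ \emph{before} introducing $g$ and then write ``if $g$ were non-constant then \dots\ $g(z)\neq g(w)$''. Non-constancy of $g$ does not guarantee $g(z)\neq g(w)$ for a pre-chosen pair. The repair is immediate: assume $g$ is non-constant, \emph{then} choose $z\neq w$ with $g(z)\neq g(w)$, and run the monotone-step argument $d_n-d_{n+1}\to0$ for that pair; everything else goes through unchanged.
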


Based on their behavior near the Denjoy-Wolff point $p$, we can classify self-maps of the disk as follows:
\begin{enumerate}
	\item {If the Denjoy-Wolff point $p$ is inside of the unit disk, then $f$ is called {\bf elliptic}. The point $p$ is a fixed point of $f$ (i.e. $f(p)=p$) and $|f^{\prime}(p)|\leq 1$. When $|f^{\prime}(p)|=1$, $f$ is an elliptic automorphism (up to change of variables, rotation around the center).}
	\item{If the Denjoy-Wolff point $p$ is on the boundary of the unit disk and $f^{\prime}(p)<1$ (in the sense of non-tangential limits), then $f$ is called {\bf hyperbolic}. $p$ is again a fixed point of $f$, now in the sense of non-tangential limits. Forward iterates tend to the Denjoy-Wolf point along non-tangential directions.}
	\item{If the Denjoy-Wolff point $p$ is on the boundary of the unit disk and $f^{\prime}(p)=1$ (in the sense of non-tangential limits), then $f$ is called {\bf parabolic}. Similarly to the hyperbolic case, $p$ is a fixed point of $f$; but now forward iterates may converge tangentially as well as non-tangentially to the boundary.}
\end{enumerate}

In this paper we will discuss the behavior of forward iterates in the parabolic case in the unit disk $\D$ and in the ball $\B^N$.

In some cases, it will be convenient to use half-plane $\Hh=\left\{z\in\C \left|\right.\rea z>0\right\}$ or Siegel half-plane $\Hh^N=\left\{(z,w)\in \C\times\C^{N-1} : \rea z>\|w\|^2\right\}$, which are biholomorphically equivalent to the unit disk $\D$ and to the unit ball $\B^N$, respectively. Without loss of generality, we can always assume that in these models, the Denjoy-Wolff point is $\infty\in\overline{\Hh}$, or  $\infty\in\overline{\Hh^N}$.

\section{One-dimensional (unit disk) case}
Note that by (\ref{SL}), the pseudo-hyperbolic distance between two consecutive forward iterates $d(z_n,z_{n+1})$ is non-increasing and thus has a limit $d_{\infty}$. Whether this limit is positive or zero defines the behavior of the sequence and the function.

\begin{definition}\label{def:znz}
We will call a sequence $\{z_n\}$ a zero-step (respectively, non-zero-step) sequence, if $d_{\infty}=\lim d(z_n,z_{n+1})=0$ (respectively, $d_{\infty}>0$).
\end{definition}

In the one-dimensional case, as a consequence of the theorem of Pommerenke (Theorem \ref{thm:Pom1} below), zero-step and non-zero-step properties of a sequence of forward iterates do not depend on the choice of the starting point but depend on the function only, so we can call functions parabolic zero-step and parabolic non-zero step, respectively (see \cite{PPC2}). It is still not known if the same is true in several variables.

\begin{remark}
Here we follow the terminology introduced in \cite{PPC2}; Pommerenke in \cite{Po} and \cite{BPo} used the term "parabolic" for the parabolic non-zero-step case and "identity" for the parabolic zero-step case. 
\end{remark}

More about the parabolic non-zero-step and zero-step cases in one dimension, including backward iteration and examples, can be found in \cite{PPC2}.  

The crucial difference between parabolic non-zero-step and zero-step functions in the unit disk is that the former are conjugated to a (vertical) translation in the half-plane:
\begin{theorem}[Pommerenke, \cite{Po}]\label{thm:Pom1}
Let $f$ be an analytic self-map of $\Hh$ of parabolic type with Denjoy-Wolff point infinity and $z_n=x_n+iy_n=f_n(1)$ be a forward iteration sequence. Then the normalized iterates 
\[
\psi_n(z)=\frac{f_n(z)-iy_n}{x_n}
\]
converge uniformly on compact subsets of $\Hh$ to a function $\psi$ such that $\psi(\Hh)\subseteq\Hh$, $\psi(1)=1$ and
\begin{align*}
\psi\circ f(z)=\phi\circ\psi(z),\hspace{0.5in} \forall z\in\Hh;
\end{align*}
where $\phi$ is a M\"obius transformation of $\Hh$ into itself and $\psi(\infty)=\infty$.
In particular, if $f$ is parabolic non-zero-step type, $\phi(z)=z+ib$, i.e. translation in $\Hh$, and if $f$ is parabolic zero-step, $\phi(z)\equiv z$ and $\psi(z)\equiv 1$.
\end{theorem}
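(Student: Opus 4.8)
The plan is to realize $\psi$ as a locally uniform limit of the normalizations $\psi_n$, extract the intertwining identity from $f_{n+1}=f_n\circ f$, and then classify the M\"obius map $\phi$ that appears, using the parabolic hypothesis. Each $\psi_n$ maps $\Hh$ into $\Hh$, since $\rea\psi_n(z)=\rea f_n(z)/x_n>0$ (subtracting a real multiple of $i$ does not change the real part), and $\psi_n(1)=1$ by construction. Hence $\{\psi_n\}$ is a normal family (transport to $\D$ and use Montel), and any locally uniform subsequential limit $\psi$, being constant or open and fixing the interior point $1$, is either $\psi\equiv 1$ or a holomorphic self-map of $\Hh$ with $\psi(1)=1$; no subsequence escapes to $\infty$ since $\psi_n(1)=1$, and $\psi(\infty)=\infty$ because $f_n(z)\to\infty$ as $z\to\infty$.

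From $f_{n+1}=f_n\circ f$ one gets, for all $z\in\Hh$,
\[
\psi_n\circ f=\phi_n\circ\psi_{n+1},\qquad \phi_n(w)=a_nw+ib_n,\quad a_n:=\frac{x_{n+1}}{x_n},\ \ b_n:=\frac{y_{n+1}-y_n}{x_n},
\]
equivalently $\psi_{n+1}=\phi_n^{-1}\circ\psi_n\circ f$; setting $z=1$ gives the scalar identity $\psi_n(z_1)=a_n+ib_n$, where $z_1=f(1)\neq 1$ (otherwise $f$ is elliptic). Wolff's lemma at the Denjoy-Wolff point $\infty$ gives $\rea f(w)\ge\rea w$ on $\Hh$, so $a_n\ge 1$ and $x_n$ is nondecreasing. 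Since $w\mapsto x_nw+iy_n$ is a hyperbolic isometry of $\Hh$, while $f_n(1)=x_n+iy_n$ and $f_n(z_1)=x_n\psi_n(z_1)+iy_n$,
\[
d_{\Hh}(z_n,z_{n+1})=d_{\Hh}\bigl(f_n(1),f_n(z_1)\bigr)=d_{\Hh}\bigl(1,\psi_n(z_1)\bigr)=d_{\Hh}(1,a_n+ib_n),
\]
where $d_{\Hh}$ is the hyperbolic metric of $\Hh$ (a fixed increasing function of the pseudo-hyperbolic metric, so (\ref{SL}) applies to it too). As $d_{\Hh}(z_n,z_{n+1})$ decreases to the finite limit $d_\infty$, the points $a_n+ib_n$ stay in a fixed hyperbolic ball about $1$, so $\{a_n\}$ and $\{b_n\}$ are bounded.

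Along a subsequence with $\psi_n\to\psi$, $\psi_{n+1}\to\tilde\psi$, $a_n\to a\ge1$, $b_n\to b$, the recursion yields $\psi\circ f=\phi\circ\tilde\psi$ with $\phi(w)=aw+ib$ a M\"obius transformation of $\Hh$ into itself fixing $\infty$, and $\psi(z_1)=a+ib$. The substantive point is to upgrade this to full convergence $\psi_n\to\psi$ with $\tilde\psi=\psi$. Here one uses that $d_{\Hh}(\psi_n(z),\psi_n(w))=d_{\Hh}(f_n(z),f_n(w))$ is nonincreasing in $n$ (Schwarz-Pick), so that every subsequential limit distorts the hyperbolic metric by the same amount and fixes $1$; combined with $\psi_n(z_1)=a_n+ib_n$ and the interplay between $d_{\Hh}(z_n,z_{n+m})$ and $\sum_{j<m}d_{\Hh}(z_{n+j},z_{n+j+1})$, one shows that $a_n+ib_n$ itself converges, which forces all subsequential limits to coincide and gives $\psi_n\to\psi$ and $\psi\circ f=\phi\circ\psi$ throughout $\Hh$. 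I expect this step — and its companion in the next paragraph, excluding a hyperbolic $\phi$ — to be the main obstacle: both rest on quantitative Julia-Wolff-Carath\'eodory information about $f$ near $\infty$ together with the monotonicity of the hyperbolic displacement $d_{\Hh}(z_n,z_{n+1})$.

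Finally, classify $\phi(w)=aw+ib$ with $a\ge 1$. If $a>1$, then $\phi$ fixes $\infty$ and the distinct boundary point $ib/(1-a)\in i\R\subseteq\partial\Hh$, so $\phi$ is a \emph{hyperbolic} M\"obius transformation; carrying this two-fixed-point structure back through the semiconjugacy $\psi\circ f=\phi\circ\psi$ forces $f$ to be hyperbolic rather than parabolic — a contradiction (this is where parabolicity is used most essentially). Hence $a=1$ and $\phi(w)=w+ib$. Since $\psi(z_1)=\phi(\psi(1))=1+ib$, we get $d_\infty=d_{\Hh}(1,1+ib)$, so $b\neq 0$ exactly when $d_\infty>0$: in the parabolic non-zero-step case $\phi(w)=w+ib$ with $b\ne0$. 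In the parabolic zero-step case $d_\infty=0$ forces $b=0$, hence $\phi=\mathrm{id}$ and $\psi\circ f=\psi$; a further argument using $d_\infty=0$ (first $\psi_n(z_1)\to1$, then inductively $\psi_n(z_k)\to1$ along the orbit, and finally that the normalized iterates must then collapse) shows $\psi\equiv\psi(1)=1$, completing the proof.
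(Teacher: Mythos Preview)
The paper does not prove this theorem: it is quoted from Pommerenke's original article \cite{Po} and stated without proof, so there is no ``paper's own proof'' to compare against. What follows is an assessment of your sketch against Pommerenke's actual argument.

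Your outline has the right architecture --- normality of $\{\psi_n\}$, the affine intertwining $\psi_n\circ f=\phi_n\circ\psi_{n+1}$ with $\phi_n(w)=a_nw+ib_n$, and the identification $\psi_n(z_1)=a_n+ib_n$ via the isometry $w\mapsto x_nw+iy_n$ --- and this is indeed how Pommerenke sets things up. You are also right to flag the two genuine difficulties: (i) upgrading subsequential convergence to full convergence of $\psi_n$, and (ii) excluding $a>1$. However, your proposed resolutions for both are not quite the ones that work.

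For (ii), the sentence ``carrying this two-fixed-point structure back through the semiconjugacy forces $f$ to be hyperbolic'' is not justified as stated: a semiconjugacy $\psi\circ f=\phi\circ\psi$ with $\phi$ hyperbolic does not by itself force $f$ hyperbolic unless you already control the boundary behaviour of $\psi$ at $\infty$, which is precisely what is unknown at that stage. Pommerenke does not argue by contradiction here; he proves directly that $a_n=x_{n+1}/x_n\to 1$ from Julia's inequality in the half-plane (the horodisk estimate $\rea f(z)\ge \rea z$ together with the parabolic angular-derivative condition at $\infty$), and then deduces $b_n\to b$ from the monotone convergence of $d_\Hh(z_n,z_{n+1})=d_\Hh(1,a_n+ib_n)$ to $d_\infty$. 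Once $(a_n,b_n)\to(1,b)$ is established, (i) follows cleanly: if $\psi$ is any subsequential limit of $\psi_{n_k}$, then $\psi_{n_k+1}=\phi_{n_k}^{-1}\circ\psi_{n_k}\circ f\to\phi^{-1}\circ\psi\circ f$, and iterating shows every shifted subsequence has the same limit, hence the full sequence converges. Your monotonicity-of-hyperbolic-distortion idea is suggestive but does not pin down a unique limit without the convergence of $(a_n,b_n)$.

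Two smaller points: the claim ``$\psi(\infty)=\infty$ because $f_n(z)\to\infty$ as $z\to\infty$'' conflates two different limits and is not a proof; this boundary statement is established separately in \cite{Po} via Julia--Carath\'eodory type estimates for $\psi$. And in the zero-step case your ``further argument'' that $\psi\equiv 1$ is essentially correct in spirit --- once $\phi=\mathrm{id}$ one has $\psi\circ f=\psi$, hence $\psi$ is constant on the orbit $\{z_k\}$, and a normal-families argument collapses $\psi$ to the constant $1$ --- but it needs to be written out.
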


Non-trivial conjugation for parabolic zero-step case was discovered at \cite{BPo}:

\begin{theorem}[Baker and Pommerenke]
Let $f$ be an analytic self-map of $\Hh$ of parabolic zero-step type with the Denjoy-Wolff point infinity. Then the sequence of normalized functions
\[
\psi_n(z)=\frac{f_n(z)-z_n}{z_{n+1}-z_n}
\]
converges uniformly on compact subsets of $\Hh$ to a function $\psi$ such that 
\begin{align*}
\psi(f(z))=\psi(z)+1,\hspace{0.5in} \forall z\in\Hh.
\end{align*}
\end{theorem}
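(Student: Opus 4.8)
The plan is to construct the linearizing map $\psi$ as a normalized limit of the iterates, following the classical Valiron/Baker–Pommerenke scheme but now in the zero-step setting where the natural normalization is the \emph{affine} rescaling $\psi_n(z)=\frac{f_n(z)-z_n}{z_{n+1}-z_n}$ rather than the purely dilative one from Theorem~\ref{thm:Pom1}. First I would record the basic estimates: since $f$ is parabolic of zero-step type with Denjoy--Wolff point $\infty$, we have $z_n=f_n(1)\to\infty$, $\rea z_n\to\infty$, and $d(z_n,z_{n+1})\to 0$, which in the half-plane picture translates to $\frac{|z_{n+1}-z_n|}{\rea z_n}\to 0$ (this is the content of the zero-step hypothesis, via the formula for the pseudo-hyperbolic distance transported to $\Hh$). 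These ratios control the geometry: $\psi_n$ is an affine image of $f_n$, hence analytic on $\Hh$, and $\psi_n(1)$ is a bounded sequence (indeed $\psi_n(1)=\frac{z_n-z_n}{z_{n+1}-z_n}=0$), which anchors the family.

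The heart of the argument is a normal-families / equicontinuity step. I would show $\{\psi_n\}$ is locally uniformly bounded on $\Hh$, so that by Montel's theorem some subsequence converges locally uniformly to a holomorphic $\psi$; the key point is then to upgrade subsequential convergence to convergence of the full sequence, and to pin down the functional equation. For the functional equation, observe the exact identity
\begin{align*}
\psi_n(f(z))=\frac{f_{n+1}(z)-z_n}{z_{n+1}-z_n}=\frac{f_{n+1}(z)-z_{n+1}}{z_{n+2}-z_{n+1}}\cdot\frac{z_{n+2}-z_{n+1}}{z_{n+1}-z_n}+\frac{z_{n+1}-z_n}{z_{n+1}-z_n}=\psi_{n+1}(z)\cdot\lambda_n+1,
\end{align*}
where $\lambda_n=\frac{z_{n+2}-z_{n+1}}{z_{n+1}-z_n}$. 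So everything hinges on proving $\lambda_n\to 1$: once that holds, passing to the limit (along a subsequence, then promoting to the full sequence) yields $\psi\circ f=\psi+1$ exactly as claimed. The convergence $\lambda_n\to 1$ is where the zero-step hypothesis must be used in an essential way — it does \emph{not} follow from $d(z_n,z_{n+1})\to 0$ alone, and this is the main obstacle. I expect to extract it from a finer analysis of the forward orbit: Julia's lemma and the Julia--Carathéodory theorem give that the horocyclic "height" data of $z_n$ behaves regularly, and the parabolic condition $f'(\infty)=1$ forces the increments $z_{n+1}-z_n$ to vary slowly; alternatively one can feed in the Baker–Pommerenke estimates on $\rea z_n$ (which grows like $o(n)$ but with controlled ratios) to conclude the consecutive increments are asymptotically equal.

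Finally I would assemble the pieces: normality gives a limit $\psi$ along a subsequence; the functional equation $\psi\circ f=\psi+1$ holds for that limit because $\lambda_n\to 1$; since $\psi$ maps into a half-plane (being a locally uniform limit of maps whose images lie in affinely rescaled copies of $\Hh$, with the rescaling ratios bounded) and is non-constant (the functional equation rules out constants), standard uniqueness of the linearizer up to additive constants forces every subsequential limit to coincide, hence the full sequence $\psi_n$ converges. One should also check $\psi$ is not identically a degenerate object — here the normalization $\psi_n(1)=0$ and a uniform positive lower bound on $|\psi_n'(1)|$ (from the chain rule applied to the functional equation, or from a Schwarz-lemma estimate on $\Hh$) guarantee $\psi$ is non-constant. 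The routine parts are the Montel argument and the limit-passage in the functional equation; the one genuinely delicate input is $\lambda_n\to 1$, which is precisely the place the zero-step assumption does its work.
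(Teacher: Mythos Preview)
The paper does not contain a proof of this theorem: it is quoted as a result of Baker and Pommerenke \cite{BPo} and used only as background for the one-dimensional picture. There is therefore no ``paper's own proof'' to compare your proposal against.

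That said, your outline is a faithful sketch of the original Baker--Pommerenke argument: the affine normalization $\psi_n(z)=\frac{f_n(z)-z_n}{z_{n+1}-z_n}$, the identity $\psi_n(f(z))=\lambda_n\psi_{n+1}(z)+1$, normality, and the identification of $\lambda_n\to 1$ as the decisive step are all correct. Two small cautions. First, the limit $\psi$ need not take values in a half-plane; the paper itself remarks that zero-step maps are conjugated to a shift \emph{in the plane}, so your claim that ``$\psi$ maps into a half-plane'' is not how one should rule out constants---use instead the normalizations $\psi_n(1)=0$, $\psi_n(f(1))=1$, which persist in the limit. Second, you are right that $\lambda_n\to 1$ is the delicate point and does not follow from $d(z_n,z_{n+1})\to 0$ alone; in the original paper this is obtained by a careful Julia--Carath\'eodory/Schwarz--Pick estimate on the increments, and your vague appeal to ``Julia's lemma and the Julia--Carath\'eodory theorem'' would need to be made concrete to count as a proof.
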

 
Thus parabolic zero-step maps can be conjugated to a (horizontal) shift in the plane.

Geometrically, these two types of maps differ by how forward iterates approach the Denjoy-Wolff point. In the parabolic non-zero-step case they converge to the Denjoy-Wolff point tangentially (Remark 1, \cite{Po}), see Figure \ref{fig:parabolic_tang}. 
\begin{figure}[h]
	\centering
		\includegraphics{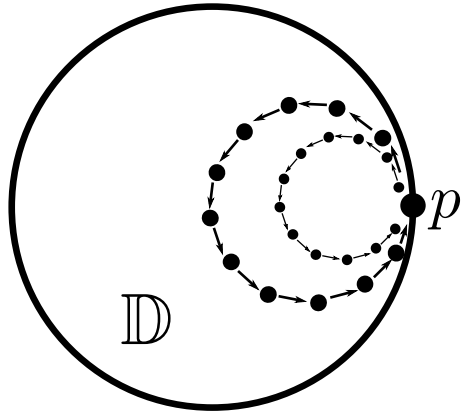}
	\caption{Orbits converge tangentially to the Denjoy-Wolff point $p$ in the parabolic non-zero-step case.}
	\label{fig:parabolic_tang}
\end{figure}
In the parabolic zero-step case, forward iterates may converge radially (Figure \ref{fig:parabolic_rad}), but a complete classification of their behavior has still not been achieved.
\begin{figure}[ht]
	\centering
		\includegraphics{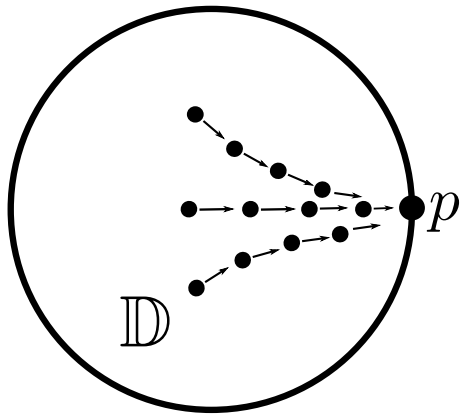}
	\caption{In some cases, orbits converge radially to the Denjoy-Wolff point $p$ in the parabolic zero-step case.}
	\label{fig:parabolic_rad}
\end{figure}

\section{Multi-dimensional case}

Now consider self map $f$ of $N$-dimensional unit ball $\B^N=\left\{Z\in\C^N: \|Z\|<1\right\}$.

Schwarz's lemma still holds in $\B^N$, with pseudo-hyperbolic distance defined as
\begin{align}
d_{\B^N}(Z,W):=\left(\frac{\left|1-\left\langle Z,W\right\rangle\right|^2}{\left(1-\left\|Z\right\|^2\right) \left(1-\left\|W\right\|^2\right)}\right)^{1/2}.\label{dBN}
\end{align}

And a version of the Denjoy-Wolff theorem also holds:
\begin{theorem}[Herv\'{e} \cite{Herve}, MacCluer \cite{McCl}]\label{thm:DWN}
Let $f:\B^N\to\B^N$ be a holomorphic map without fixed points in $\B^N$. Then the sequence of iterates $\{f_n\}$ converges uniformly on compact subsets of $\B^N$ to the constant map $Z\mapsto p$ for a (unique) point $p\in\partial\B^N$ (called the Denjoy-Wolff point of $f$); and the number
\begin{align*}
c:=\liminf_{Z\to p}\frac{1-\|f(Z)\|}{1-\|Z\|}\in(0,1]
\end{align*}
is called the multiplier or the boundary dilatation coefficient of $f$ at $p$.
\end{theorem}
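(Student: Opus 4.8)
The plan is: (i) produce a candidate boundary point $p$ together with Wolff's horosphere estimate; (ii) show the iterates $\{f_n\}$ are compactly divergent; (iii) combine these to identify $p$ as the common limit, and read off $c\in(0,1]$. Observe first that $f$ is non-constant (a constant self-map of $\B^N$ has its value, and hence a fixed point, inside $\B^N$), and that for automorphisms the conclusion is classical — a fixed-point-free automorphism of $\B^N$ is, in the Siegel model $\Hh^N$, conjugate to a standard dilation-type or parabolic automorphism whose iterates visibly converge to the boundary fixed point $\infty$ — so we may assume $f\notin\operatorname{Aut}(\B^N)$.

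\emph{Step 1: the Wolff point and horosphere invariance.} For $0<r<1$ the map $Z\mapsto f(rZ)$ is a continuous self-map of the compact convex set $\overline{\B^N}$, so by Brouwer's fixed point theorem it has a fixed point $z_r$; since $z_r=f(rz_r)$, in fact $z_r\in\B^N$. If $\|z_r\|$ failed to tend to $1$ as $r\to1^-$, a subsequential limit of the $z_r$ would be an interior fixed point of $f$, contrary to hypothesis; hence $\|z_r\|\to1$, and along some sequence $r_k\to1^-$ we have $z_{r_k}\to p\in\partial\B^N$. Applying the Schwarz–Pick contraction $d_{\B^N}(f(z),f(rz_r))\le d_{\B^N}(z,rz_r)$ (Schwarz's lemma in $\B^N$), i.e.\ $d_{\B^N}(f(z),z_r)\le d_{\B^N}(z,rz_r)$, expanding both sides via (\ref{dBN}), discarding the factor $\frac{1-\|z_r\|^2}{1-r^2\|z_r\|^2}\le1$, and letting $r=r_k\to1^-$ leaves
\[
\frac{|1-\langle f(z),p\rangle|^2}{1-\|f(z)\|^2}\ \le\ \frac{|1-\langle z,p\rangle|^2}{1-\|z\|^2},\qquad z\in\B^N .
\]
Writing $E(p,R)=\{\,z\in\B^N:|1-\langle z,p\rangle|^2<R(1-\|z\|^2)\,\}$ for the horosphere at $p$ (an ellipsoid internally tangent to $\partial\B^N$ at $p$), this says $f(E(p,R))\subseteq E(p,R)$, hence $f_n(E(p,R))\subseteq E(p,R)$ for all $n\ge1$ and all $R>0$.

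\emph{Step 2: $\{f_n\}$ is compactly divergent (the main point).} The family $\{f_n\}$ is normal. Suppose it were not compactly divergent; then some subsequence $f_{n_k}$ converges locally uniformly to a holomorphic $h$ taking some value in $\B^N$, whence the maximum principle gives $h(\B^N)\subseteq\B^N$. I would derive a contradiction with the absence of an interior fixed point by a dichotomy on $h$. If $h\equiv c\in\B^N$ is constant, comparing the two evaluations of the limit of $f_{n_k}(f(c))=f(f_{n_k}(c))$ — the first tends to $c$ since $f_{n_k}\to c$ locally uniformly, the second to $f(c)$ by continuity — yields $f(c)=c$, a contradiction. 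If $h$ is non-constant, then $f_{n_k+1}=f\circ f_{n_k}\to f\circ h$ locally uniformly, and since $n\mapsto d_{\B^N}(f_n(z),f_n(w))$ is non-increasing (hence convergent) both $h$ and $f\circ h$ realize its limit, so $d_{\B^N}(f(h(z)),f(h(w)))=d_{\B^N}(h(z),h(w))$ for all $z,w$; i.e.\ $f$ is a pseudo-hyperbolic isometry on $h(\B^N)$. When $h(\B^N)$ has interior, conjugating by automorphisms so that this restriction fixes $0$ and invoking the equality case of the Schwarz lemma in $\B^N$ forces that conjugate — hence $f$ — to be an automorphism, which is excluded; in the remaining (thin-image) case the same isometry information forces some iterate of $f$, and therefore $f$ itself, to have an interior fixed point, again excluded. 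So $\{f_n\}$ must be compactly divergent.

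\emph{Step 3: conclusion.} By Step 2 and normality, every locally uniform subsequential limit $h$ of $\{f_n\}$ maps $\B^N$ into $\partial\B^N$, hence is a constant $c_h\in\partial\B^N$ (maximum principle). By Step 1, $f_n(z)\in\overline{E(p,R_z)}$ with $R_z=|1-\langle z,p\rangle|^2/(1-\|z\|^2)$ for every $z\in\B^N$, so $c_h\in\bigcap_{z\in\B^N}\overline{E(p,R_z)}$; since $R_{tp}\to0$ as $t\to1^-$ while $\bigcap_{R>0}\overline{E(p,R)}=\{p\}$ (Cauchy–Schwarz), this forces $c_h=p$. Thus every subsequential limit is the constant map $p$, i.e.\ $f_n\to p$ uniformly on compact subsets of $\B^N$, and uniqueness of $p$ is immediate. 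Finally, specializing the estimate of Step 1 to $z=tp$ gives $\|f(tp)\|\ge t$, whence $\liminf_{Z\to p}(1-\|f(Z)\|)/(1-\|Z\|)\le1$, while the lower bound $c>0$ is the lower Julia–Wolff–Carathéodory estimate in the ball; hence $c\in(0,1]$. I expect Step 2 to be the main obstacle: it is where the fixed-point-free hypothesis is genuinely used, and converting "a subsequence of the iterates converges into $\B^N$" into an interior fixed point (or an automorphism) needs the rigidity in the Schwarz lemma together with some care about the image of the limit map.
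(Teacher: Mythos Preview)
The paper does not prove this theorem: it is quoted with attributions to Herv\'e and MacCluer and used as background for the parabolic discussion, so there is no in-paper argument to compare against.

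As for the sketch itself, Steps~1 and~3 are the standard Wolff-lemma route and are fine. In Step~2 the constant-limit case is handled correctly. The non-constant case, however, has a real gap. From $d_{\B^N}\!\bigl(f(h(z)),f(h(w))\bigr)=d_{\B^N}\!\bigl(h(z),h(w)\bigr)$ you only know that $f$ is a pseudo-hyperbolic isometry \emph{on} the open set $h(\B^N)$; you do not know that $f$ maps $h(\B^N)$ into itself, nor that $f$ fixes any point there, so you cannot ``conjugate so that this restriction fixes $0$'' and invoke the equality case of Schwarz. (Also, for non-constant holomorphic $h$ the image $h(\B^N)$ is open by the open mapping theorem, so there is no separate ``thin-image'' case to treat.)

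The usual way to close this step is different: pass to a further subsequence so that $m_k:=n_{k+1}-n_k\to\infty$ and $f^{\,m_k}\to g$; from $f_{n_{k+1}}=f^{\,m_k}\circ f_{n_k}$ one gets $g\circ h=h$, hence $g\equiv\mathrm{id}$ on the open set $h(\B^N)$ and therefore on all of $\B^N$. Extracting a limit $g_1$ of $f^{\,m_k-1}$ then gives $f\circ g_1=g_1\circ f=\mathrm{id}$, so $f\in\operatorname{Aut}(\B^N)$, which you have excluded. You were right to flag Step~2 as the crux; replacing the Schwarz-equality appeal by this $g\equiv\mathrm{id}$ argument makes the proof go through.
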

The map $f$ is called {\bf hyperbolic} if $c<1$ and {\bf parabolic} if $c=1$.

For the maps of parabolic type, it is still possible to define zero-step and non-zero step sequences as in Definition \ref{def:znz}. However, the question whether the same map can have sequences of both types is still open.

\begin{conjecture}
Let $f$ a self map of $\B^N$ of parabolic type. If the step $d_{\B^N}(f_n(Z_0),f_{n+1}(Z_0))\to 0$ for some $Z_0\in\B^N$, then $d_{\B^N}(f_n(Z),f_{n+1}(Z))\to 0$ for all $Z\in\B^N$.
\end{conjecture}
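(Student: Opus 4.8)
We sketch a possible line of attack on the conjecture. One passes to the Siegel half-plane model $\Hh^{N}$ with Denjoy-Wolff point $\infty$, exploits the monotonicity of the Kobayashi distance along orbits, and then tries to import a several-variables analogue of Pommerenke's normalization (Theorem~\ref{thm:Pom1}) together with its zero-step counterpart.

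First one fixes bookkeeping. Let $\rho$ be the Kobayashi (hyperbolic) distance of $\Hh^{N}\cong\B^{N}$, which is a monotone function of $d_{\B^{N}}$; by the Schwarz-Pick inequality $\rho(f(Z),f(W))\le\rho(Z,W)$, so for every pair $Z,W$ the sequence $n\mapsto\rho(f_{n}(Z),f_{n}(W))$ is non-increasing and has a limit $\ell(Z,W)\in[0,\infty)$. The step at $Z$ in the sense of Definition~\ref{def:znz} is then $\ell(Z,f(Z))$. The function $\ell$ satisfies the triangle inequality and is $f$-invariant, $\ell(f(Z),f(W))=\ell(Z,W)$; hence $Z\sim W\iff\ell(Z,W)=0$ is an $f$-invariant equivalence relation, the step at $f_{j}(Z)$ equals the step at $Z$ for every $j$, and the zero-step hypothesis at $Z_{0}$ places the whole forward orbit of $Z_{0}$ in a single $\sim$-class. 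Consequently it suffices to prove the formally stronger, but ``flatter'', statement that $\ell(Z_{0},f(Z_{0}))=0$ forces $\ell\equiv0$ on $\Hh^{N}\times\Hh^{N}$: granting that, the step at any $Z$ is simply $\ell(Z,f(Z))=0$.

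The substantive point is to show $\ell(Z,Z_{0})=0$ for every $Z$, that is, that all orbits asymptotically synchronize with the distinguished one. Here one would attempt a canonical intertwining model: automorphisms $L_{n}$ of $\Hh^{N}$ chosen so that $L_{n}(f_{n}(Z_{0}))$ stays in a fixed compact set, a normal-families limit $\psi=\lim L_{n}\circ f_{n}$ with $\psi\circ f=\phi\circ\psi$ for some automorphism $\phi$ of $\Hh^{N}$, and then a classification of the admissible $\phi$ (rotations, vertical translations, the boundary-fixing parabolic one-parameter subgroups, and their combinations). The zero-step hypothesis should force $\phi$ to contain no ``hyperbolic'' part and, more importantly, force $\psi$ to collapse orbits; Julia-Wolff-Carath\'{e}odory estimates in $\B^{N}$ would be the instrument for turning ``$\phi$ has no separating component'' into ``$\ell\equiv0$''. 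A complementary, and perhaps sturdier, route avoids normalization altogether and works with the orbit geometry directly: writing $f_{n}(Z)=(a_{n}(Z),b_{n}(Z))$ in $\Hh^{N}$, one estimates the tangential separation $\|b_{n}(Z)-b_{n}(Z_{0})\|$ and the normal separation $|\rea a_{n}(Z)-\rea a_{n}(Z_{0})|$ against the horospherical scale $\rea a_{n}(Z_{0})-\|b_{n}(Z_{0})\|^{2}$, and shows the zero-step hypothesis forces both to decay fast enough relative to it that $\rho(f_{n}(Z),f_{n}(Z_{0}))\to0$.

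The main obstacle is precisely that no full analogue of Pommerenke's theorem is available in $\B^{N}$: the renormalized iterates need not converge, and even where a canonical semimodel does exist --- via Abate's iteration theory on taut manifolds and the canonical-model constructions of Bracci and Poggi-Corradini --- the model manifold may be a lower-dimensional ball or Siegel domain carrying its own nontrivial parabolic dynamics, so the problem does \emph{not} reduce to a single one-dimensional translation. Thus the crux is to obtain \emph{uniform} control on the geometry of all orbits as they approach $p$, namely on the rates at which their tangential and normal separations decay relative to the horospherical scale at $f_{n}(Z_{0})$, with no normalization theorem to lean on. Supplying those orbit-geometry estimates is exactly what is missing, and is why the assertion is still only a conjecture.
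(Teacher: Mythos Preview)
The statement you are addressing is labeled \emph{Conjecture} in the paper, and the paper offers no proof of it; it is presented explicitly as an open problem (``it is still not known if the same is true in several variables''). There is therefore no paper proof to compare your proposal against.

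Your proposal is likewise not a proof, and you say so yourself in the final paragraph: the renormalization scheme you outline has no known several-variables analogue, the canonical semimodel may be lower-dimensional and carry its own parabolic dynamics, and the needed uniform orbit-geometry estimates are ``exactly what is missing.'' So your write-up is an honest survey of strategies together with an accurate diagnosis of why they do not currently close. That is a reasonable thing to submit for a conjecture, but it should not be labeled a proof proposal; it is a discussion of obstructions. If anything, the concrete content you could add is that the paper's Theorem~\ref{claim:zerostep} gives a partial converse-type result (restricted approach forces zero step), which constrains the geometry of any putative non-zero-step orbit but does not by itself settle the conjecture.
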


To describe geometric behavior of forward iterates near the boundary of the ball, we will need several notions that generalize non-tangential approach in the disk:

\begin{definition}
The Koranyi region $K(X,M)$ of vertex $X\in\partial\B^N$ and amplitude $M>1$ is the set
\begin{align*}
K(X,M)=\left\{Z\in\B^N\left| \frac{\left|1-\left\langle Z,X\right\rangle\right|}{1-\|Z\|}<M\right.\right\}.
\end{align*}
\end{definition}

When $N=1$, it is the usual Stolz angle in the disk; but for $N>1$ the region is tangent to the boundary of the ball along some directions. 

\begin{definition}\label{def:restrictedcurve}
For $X\in\partial\B^N$, a sequence $Z_n\to X$ is called {\bf special} if 
\begin{align}
\lim_{n\to\infty}\frac{\|Z_n-\left\langle Z_n,X\right\rangle X\|^2}{1-\|\left\langle Z_n,X\right\rangle X\|^2}=0,\label{specialcond}
\end{align}
and {\bf restricted} if it is special and its orthogonal projection $\left\langle Z_n,X\right\rangle X$ is non-tangential.
\end{definition}

The connection between non-tangential, Koranyi and restricted approaches is described by the following
\begin{lemma}[Lemma (2.2.24), \cite{Abate}]\label{spec_tang_Kor}
Let $Z_n\in\B^N$ be a sequence such that $Z_n\to X\in\partial\B^N$ as $n\to\infty$. Then
\begin{enumerate}[(i)]
	\item{if $Z_n$ is non-tangential, then it is restricted;}
	\item{assume $Z_n$ is special. If $Z_n$ is restricted, then it lies eventually in a Koranyi region with vertex $X$. Conversely, if $Z_n$ lies in a Koranyi region, it is restricted.}
\end{enumerate}
\end{lemma}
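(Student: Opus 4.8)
The plan is to verify the three implications of Lemma~\ref{spec_tang_Kor} directly from the definitions, using nothing beyond elementary estimates on the quantities
$|1-\langle Z_n,X\rangle|$, $1-\|Z_n\|$, and the "tangential part" $\|Z_n-\langle Z_n,X\rangle X\|^2$. Throughout, write $\zeta_n=\langle Z_n,X\rangle\in\D$, so that $Z_n=\zeta_n X+W_n$ with $W_n\perp X$, $\|Z_n\|^2=|\zeta_n|^2+\|W_n\|^2$, and $\langle Z_n,X\rangle X=\zeta_n X$. The central algebraic identity I would record first is
\[
1-\|Z_n\|^2=(1-|\zeta_n|^2)-\|W_n\|^2=(1-|\zeta_n|^2)\Bigl(1-\frac{\|W_n\|^2}{1-|\zeta_n|^2}\Bigr),
\]
so the special condition \eqref{specialcond}, which says $\|W_n\|^2/(1-|\zeta_n|^2)\to 0$, is exactly the statement that $1-\|Z_n\|^2$ and $1-|\zeta_n|^2$ are comparable in the limit; similarly $1-\|Z_n\|$ and $1-|\zeta_n|$ are comparable. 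This identity is what lets one pass back and forth between the full sequence $Z_n$ and its projection $\zeta_n X$.

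For part~(i), suppose $Z_n$ is non-tangential, i.e. $Z_n$ lies in some Koranyi region $K(X,M)$; I must show it is special and that $\zeta_n X$ is non-tangential. Non-tangentiality gives $|1-\zeta_n|=|1-\langle Z_n,X\rangle|<M(1-\|Z_n\|)\le M(1-\|Z_n\|^2)/(1-\|Z_n\|)\cdot$ — more usefully, $|1-\zeta_n|\le M(1-\|Z_n\|)\le M(1-\|W_n\|^2\text{-corrected}\ldots)$; the clean route is: from $\|W_n\|^2\le 1-\|Z_n\|^2-(1-|\zeta_n|^2)+2(1-|\zeta_n|)$ rearranged, one gets $\|W_n\|^2\le (1-\|Z_n\|)(1+\|Z_n\|)+\ldots$, and since $1-|\zeta_n|\ge 1-|\langle Z_n,X\rangle|\ge \tfrac12|1-\zeta_n|^2/(1-|\zeta_n|)$ is awkward, I would instead bound $\|W_n\|^2\le 1-\|Z_n\|^2\le 2(1-\|Z_n\|)$ and $1-|\zeta_n|^2\ge$ a constant multiple of $|1-\zeta_n|^2$, hence of $(1-\|Z_n\|)^2/M^2$; dividing, $\|W_n\|^2/(1-|\zeta_n|^2)\lesssim M^2(1-\|Z_n\|)\to0$, giving the special condition. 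Then $|1-\zeta_n|<M(1-\|Z_n\|)\le M(1-|\zeta_n|)$ up to the now-established comparability, so $\zeta_n$ stays in a Stolz angle in $\D$, i.e. $\zeta_n X$ is non-tangential; combined with specialness this is precisely "restricted."

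For part~(ii), assume throughout that $Z_n$ is special, so the comparabilities above hold. If $Z_n$ is restricted, then by definition $\zeta_n$ is non-tangential in $\D$, meaning $|1-\zeta_n|\le C(1-|\zeta_n|)$ for some $C$ and all large $n$. Then
\[
\frac{|1-\langle Z_n,X\rangle|}{1-\|Z_n\|}=\frac{|1-\zeta_n|}{1-\|Z_n\|}\le C\,\frac{1-|\zeta_n|}{1-\|Z_n\|},
\]
and the right-hand side is bounded because, by specialness, $1-|\zeta_n|\le \text{const}\cdot(1-\|Z_n\|)$ eventually — indeed $1-\|Z_n\|\ge \tfrac12(1-\|Z_n\|^2)=\tfrac12(1-|\zeta_n|^2)(1-o(1))\ge\tfrac14(1-|\zeta_n|)$ for large $n$. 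Hence $Z_n\in K(X,M)$ for suitable $M$. Conversely, if $Z_n\in K(X,M)$, then $Z_n$ is non-tangential by definition, and part~(i) already shows a non-tangential sequence is restricted; so nothing new is needed here beyond invoking~(i).

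The only genuine obstacle is bookkeeping: one must be careful that the "eventually" in specialness is enough to turn the asymptotic comparability of $1-\|Z_n\|$ with $1-|\zeta_n|$ into honest two-sided inequalities with fixed constants for all large $n$, and that the Koranyi amplitude $M$ produced in the converse direction can be chosen uniformly (it can, since only finitely many terms sit outside, and those can be absorbed by enlarging $M$). No deep input is required; the whole argument is the identity $1-\|Z\|^2=(1-|\langle Z,X\rangle|^2)-\|Z-\langle Z,X\rangle X\|^2$ together with the elementary fact that $1-t\le 1-t^2\le 2(1-t)$ for $t\in[0,1]$, applied to both $\|Z_n\|$ and $|\zeta_n|$.
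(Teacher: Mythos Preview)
The paper does not supply a proof of this lemma---it is quoted from Abate's monograph---so there is nothing in the paper to compare your argument against. Judged on its own merits, your proposal has a genuine error: you identify ``non-tangential'' with ``lying in a Koranyi region.'' In $\B^N$ for $N>1$ these are different; non-tangential means the cone condition $\|Z_n-X\|<C(1-\|Z_n\|)$, and the paper itself notes that Koranyi regions are tangent to $\partial\B^N$ in complex-tangential directions, hence strictly larger than cones. Your opening line for~(i), ``suppose $Z_n$ is non-tangential, i.e.\ $Z_n$ lies in some Koranyi region,'' is therefore the wrong hypothesis, and what you then attempt---that Koranyi approach alone forces the special condition---is actually false. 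With $X=(1,0)$, $\zeta_n=1-\epsilon_n$ real and $\|W_n\|^2=\epsilon_n$, one has $1-\|Z_n\|\sim\epsilon_n/2$, so $Z_n\in K(X,M)$ for any $M>2$, yet $\|W_n\|^2/(1-|\zeta_n|^2)\to\tfrac12\ne 0$. Thus your chain ending in ``$\|W_n\|^2/(1-|\zeta_n|^2)\lesssim M^2(1-\|Z_n\|)$'' cannot be valid; the step ``$1-|\zeta_n|^2\ge$ a constant multiple of $|1-\zeta_n|^2$'' combines with the Koranyi bound in the wrong direction to give what you claim. The same misidentification then voids your converse of~(ii), which you reduce to~(i).

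The repair for~(i) is to use the cone hypothesis directly: since $\|Z_n-X\|^2=|1-\zeta_n|^2+\|W_n\|^2$, non-tangentiality gives both $|1-\zeta_n|\le\|Z_n-X\|<C(1-\|Z_n\|)\le C(1-|\zeta_n|)$ (so the projection lies in a Stolz angle) and $\|W_n\|^2\le\|Z_n-X\|^2<C^2(1-|\zeta_n|)^2$, whence $\|W_n\|^2/(1-|\zeta_n|^2)\le C^2(1-|\zeta_n|)/(1+|\zeta_n|)\to 0$. For the converse of~(ii) you must actually use the standing special hypothesis---this is precisely why it is there: special gives $1-\|Z_n\|\sim 1-|\zeta_n|$ via your own identity, so the Koranyi bound $|1-\zeta_n|<M(1-\|Z_n\|)$ becomes $|1-\zeta_n|<M'(1-|\zeta_n|)$ for large $n$, i.e.\ the projection is non-tangential. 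Your forward direction of~(ii) is correct as written.
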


Thus Koranyi and restricted regions are weaker generalizations of the non-tangential approach in one dimension. In this paper, we consider restricted sequences of forward iterates in the parabolic case.

Similarly to the non-tangential approach the one-dimensional case, Koranyi regions and restricted approaches are used to define limits at the boundary of the ball, called $K$-limits and restricted $K$-limits, respectively.

We will need the following result:
\begin{lemma}[part (i) of Theorem 2.2.29 in \cite{Abate}]\label{thm:ang_lder}
Let $f$ be an analytic self-map of $\B^N$ and $X\in\partial\B^N$ be such that
\[
\liminf_{Z\to X}\frac{1-\left\|f(Z)\right\|}{1-\left\|Z\right\|}=\alpha<\infty.
\]
Then $f$ has $K$-limit $Y\in\partial\B^N$, and the function
\begin{align*}
\frac{1-\left\langle f(Z),Y\right\rangle}{1-\left\langle Z,X\right\rangle}
\end{align*}
has restricted $K$-limit $\alpha$ at $X$ and is bounded in every Koranyi region.
\end{lemma}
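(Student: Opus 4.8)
The plan is to run the higher-dimensional Julia--Wolff--Carath\'eodory scheme: first extract a Julia-type inequality from the $\B^N$-version of the point-invariant Schwarz lemma and the explicit distance (\ref{dBN}); then read off the existence of the $K$-limit $Y$ and the boundedness of the quotient in Koranyi regions by elementary geometry; and finally pin the restricted $K$-limit down to $\al$ by slicing along the radius through $X$ and invoking the one-variable theory together with a Lindel\"of-type principle.

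I would start by choosing $Z_k\to X$ realizing the $\liminf$, so $(1-\|f(Z_k)\|)/(1-\|Z_k\|)\to\al$; since the right-hand side forces $\|f(Z_k)\|\to1$, after passing to a subsequence $f(Z_k)\to Y$ for some $Y\in\partial\B^N$. Writing the Schwarz lemma $d_{\B^N}(f(Z),f(Z_k))\le d_{\B^N}(Z,Z_k)$ with the formula (\ref{dBN}), squaring, and letting $k\to\infty$ yields Julia's inequality
\begin{align*}
\frac{\left|1-\left\langle f(Z),Y\right\rangle\right|^{2}}{1-\|f(Z)\|^{2}}\;\le\;\al\,\frac{\left|1-\left\langle Z,X\right\rangle\right|^{2}}{1-\|Z\|^{2}},\qquad Z\in\B^{N}.
\end{align*}
If $Z\to X$ within a Koranyi region $K(X,M)$, then $|1-\langle Z,X\rangle|^{2}/(1-\|Z\|^{2})\le M\,|1-\langle Z,X\rangle|\to0$, so the left-hand side tends to $0$; its denominator being at most $1$, $\langle f(Z),Y\rangle\to1$, hence $\|f(Z)\|\to1$ and $\|f(Z)-Y\|^{2}=\|f(Z)\|^{2}-2\rea\langle f(Z),Y\rangle+1\to0$, i.e.\ $f$ has $K$-limit $Y$ at $X$. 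For the boundedness claim, substitute $1-\|f(Z)\|^{2}\le2(1-\|f(Z)\|)\le2\,|1-\langle f(Z),Y\rangle|$ into Julia's inequality and cancel one factor of $|1-\langle f(Z),Y\rangle|$ to get $\left|\frac{1-\langle f(Z),Y\rangle}{1-\langle Z,X\rangle}\right|\le 2\al\,\frac{|1-\langle Z,X\rangle|}{1-\|Z\|^{2}}\le2\al\,\frac{|1-\langle Z,X\rangle|}{1-\|Z\|}<2\al M$ on $K(X,M)$.

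The crux is identifying the restricted $K$-limit with $\al$. I would restrict $f$ to the complex line through $X$ and set $\tilde g(\zeta)=\langle f(\zeta X),Y\rangle$, a holomorphic self-map of $\D$. Putting $Z=\zeta X$ in Julia's inequality and using $1-\|f(\zeta X)\|^{2}\le1-|\tilde g(\zeta)|^{2}$ shows $|1-\tilde g(\zeta)|^{2}/(1-|\tilde g(\zeta)|^{2})\le\al\,|1-\zeta|^{2}/(1-|\zeta|^{2})$, so $1$ is a Julia point of $\tilde g$ with constant $d:=\liminf_{\zeta\to1}(1-|\tilde g(\zeta)|)/(1-|\zeta|)\le\al$. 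Since $f$ has $K$-limit $Y$ and the radius $\{tX\}$ lies in every Koranyi region, $\tilde g(t)\to1$, so the classical one-variable Julia--Wolff--Carath\'eodory theorem (with boundary point $1$) gives $(1-\tilde g(t))/(1-t)\to d$ as $t\to1^{-}$. Conversely, $|\tilde g(\zeta)|\le\|f(\zeta X)\|$ and $\|\zeta X\|=|\zeta|$ give $(1-|\tilde g(\zeta)|)/(1-|\zeta|)\ge(1-\|f(\zeta X)\|)/(1-\|\zeta X\|)$, whose $\liminf$ as $\zeta\to1$ is at least $\al$ because $\{\zeta X:\zeta\in\D\}$ approaches $X$ and $\al$ is an infimum over all such approaches; hence $d\ge\al$, so $d=\al$. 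Therefore $h(Z):=(1-\langle f(Z),Y\rangle)/(1-\langle Z,X\rangle)$ is holomorphic on $\B^N$, has radial limit $\al$ at $X$, and (by the previous paragraph) is bounded in every Koranyi region; the Lindel\"of-type principle for restricted approach --- a function bounded in Koranyi regions that has a limit along one restricted curve ending at $X$ has that value as its restricted $K$-limit (see \cite{Abate}) --- then yields the restricted $K$-limit $\al$, finishing the proof.

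The first two paragraphs are essentially bookkeeping with the Schwarz lemma; the genuine work is in the last step, which rests on three inputs: the classical one-variable Julia--Wolff--Carath\'eodory theorem, the restricted Lindel\"of principle, and --- most delicately --- the two-sided squeeze $d=\al$. The latter works only because the $\liminf$ defining $\al$ is an infimum over \emph{all} approaches to $X$, so restricting it to the radius can only raise it, and this lower bound must be matched against the upper bound extracted from Julia's inequality. One could instead try to estimate $h$ directly along an arbitrary restricted sequence without invoking Lindel\"of, but that is precisely where the difficulty concentrates.
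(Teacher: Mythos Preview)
The paper does not prove this lemma at all: it is quoted verbatim as part (i) of Theorem~2.2.29 in Abate's monograph \cite{Abate} and used as a black box in the proof of Theorem~\ref{claim:zerostep}. So there is no ``paper's own proof'' to compare with.

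That said, your argument is correct and is exactly the standard Julia--Wolff--Carath\'eodory proof one finds in Abate (and in Rudin's \emph{Function Theory in the Unit Ball}): (1) derive Julia's inequality in $\B^N$ by passing to the limit in the Schwarz--Pick inequality along a sequence realizing the $\liminf$; (2) read off the $K$-limit $Y$ and the Koranyi boundedness of the quotient directly from Julia's inequality; (3) slice along the complex line through $X$, apply the one-variable JWC theorem to $\tilde g(\zeta)=\langle f(\zeta X),Y\rangle$, squeeze the angular derivative between $\al$ (from Julia) and $\al$ (from the $\liminf$ being an infimum over all approaches), and then promote the radial limit to a restricted $K$-limit via the \v{C}irka--Lindel\"of principle. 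The only slightly delicate points---that $\tilde g$ is a genuine self-map of $\D$ (Cauchy--Schwarz), that its boundary fixed point is $1$ (because $f$ has $K$-limit $Y$ along the radius), and the two-sided squeeze $d=\al$---you handle correctly.
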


We obtained the following result for the forward iteration sequences in the unit ball:

\begin{theorem}\label{claim:zerostep}
Let $f$ be a parabolic self-map of the unit ball $\B^N$ with the Denjoy-Wolff point $(1,0)\in\C\times\C^N$. If the sequence of forward iterates $\left\{Z_n\right\}_{n=1}^{\infty}$ is restricted, then it must have zero step, i.e., $d_{\B^N}(Z_n,Z_{n+1})\to 0$.
\end{theorem}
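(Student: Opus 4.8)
We keep the Denjoy--Wolff point $X=(1,0)\in\partial\B^N$, write $Z_n=(z_{1,n},z'_n)\in\C\times\C^{N-1}$, and set $a_n:=1-\langle Z_n,X\rangle=1-z_{1,n}$; since $Z_n\in\B^N$ one has $|z_{1,n}|<1$, so $a_n\neq0$, and in fact $\rea a_n>0$. The plan is to control separately the three factors of
\[
d_{\B^N}(Z_n,Z_{n+1})^2=\frac{|1-\langle Z_n,Z_{n+1}\rangle|^2}{(1-\|Z_n\|^2)(1-\|Z_{n+1}\|^2)}
\]
and to show that this quotient tends to $1$. Since $d_{\B^N}(Z,W)^2\geq1$, with equality exactly when $Z=W$, this is precisely the statement that $\{Z_n\}$ has zero step.

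First I would unwind the hypothesis that $\{Z_n\}$ is restricted into two asymptotic facts. Because its orthogonal projection $z_{1,n}$ tends to $1$ non-tangentially in $\D$, the points $a_n$ lie eventually in a Stolz sector at the origin: there is $\delta\in(0,1]$ with $\rea a_n\geq\delta|a_n|$ for all large $n$, and hence $1-|z_{1,n}|^2=2\rea a_n-|a_n|^2=(1+o(1))\,2\rea a_n$. Because $\{Z_n\}$ is special, $\|z'_n\|^2=o(1-|z_{1,n}|^2)=o(\rea a_n)$, and therefore $1-\|Z_n\|^2=(1-|z_{1,n}|^2)-\|z'_n\|^2=(1+o(1))\,2\rea a_n$; the same holds with $n$ replaced by $n+1$, the tail $\{Z_{n+1}\}$ being again a restricted sequence.

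The substantive step is to pin down the ratio $a_{n+1}/a_n$. Since $f$ is parabolic, $\liminf_{Z\to X}(1-\|f(Z)\|)/(1-\|Z\|)=c=1$, so Lemma~\ref{thm:ang_lder} applies and produces a $K$-limit $Y\in\partial\B^N$ of $f$ at $X$. By Lemma~\ref{spec_tang_Kor}(ii) the restricted orbit $\{Z_n\}$ lies eventually in a Koranyi region with vertex $X$, and $Z_{n+1}=f(Z_n)$ converges both to $Y$ (as the image of such a sequence) and to $X$ (by Theorem~\ref{thm:DWN}), so $Y=X$. Lemma~\ref{thm:ang_lder} then says that $(1-\langle f(Z),X\rangle)/(1-\langle Z,X\rangle)$ has restricted $K$-limit $1$ at $X$, and feeding in the restricted sequence $\{Z_n\}$ gives $a_{n+1}/a_n\to1$. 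Writing $a_{n+1}=(1+\epsilon_n)a_n$ with $\epsilon_n\to0$ and using $\rea a_n\geq\delta|a_n|$, I obtain $\rea a_{n+1}/\rea a_n\to1$ and $|\ima a_{n+1}-\ima a_n|\leq|\epsilon_n|\,|a_n|=o(\rea a_n)$. This is exactly where parabolicity is essential: for a hyperbolic map the same argument yields $a_{n+1}/a_n\to c<1$.

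Finally I would assemble the estimate. Expanding, $1-\langle Z_n,Z_{n+1}\rangle=a_n+\overline{a_{n+1}}-a_n\overline{a_{n+1}}-\langle z'_n,z'_{n+1}\rangle$, where $|a_n\overline{a_{n+1}}|=O((\rea a_n)^2)=o(\rea a_n)$ by the Stolz bound and $|\langle z'_n,z'_{n+1}\rangle|\leq\|z'_n\|\,\|z'_{n+1}\|=o(\rea a_n)$ by the special condition, while $|a_n+\overline{a_{n+1}}|=(1+o(1))(\rea a_n+\rea a_{n+1})$ because its imaginary part is $o(\rea a_n)$. Hence $|1-\langle Z_n,Z_{n+1}\rangle|=(1+o(1))(\rea a_n+\rea a_{n+1})$, and with $t_n:=\rea a_{n+1}/\rea a_n\to1$,
\[
d_{\B^N}(Z_n,Z_{n+1})^2=(1+o(1))\,\frac{(\rea a_n+\rea a_{n+1})^2}{4\,\rea a_n\,\rea a_{n+1}}=(1+o(1))\,\frac{(1+t_n)^2}{4t_n}\longrightarrow 1 ,
\]
so $\{Z_n\}$ has zero step. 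The parts I expect to be routine are the two asymptotic reductions; the one genuine point is the joint use of ``parabolic $\Rightarrow$ the Julia--Wolff--Carath\'eodory ratio has restricted $K$-limit $1$'' with the fact that a restricted forward orbit is an admissible sequence for that restricted $K$-limit, which is what forces $(1+t_n)^2/(4t_n)$ to collapse to $1$. The only mild technical nuisance is bookkeeping the off-diagonal components $z'_n$: one must check that the ``special'' part of ``restricted'' is strong enough both to kill $\langle z'_n,z'_{n+1}\rangle$ against $\rea a_n$ and to replace $1-\|Z_n\|^2$ by $2\rea a_n$ up to a factor $1+o(1)$.
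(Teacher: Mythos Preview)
Your proof is correct and follows essentially the same approach as the paper: both use the special condition to kill the off-diagonal components and invoke Lemma~\ref{thm:ang_lder} with $\alpha=c=1$ along the restricted orbit to obtain $(1-z_{1,n+1})/(1-z_{1,n})\to 1$, after which only an elementary computation remains. The only differences are cosmetic---the paper reduces that last step to the one-dimensional statement $d(z_{1,n},z_{1,n+1})\to 0$ while you carry out an equivalent asymptotic expansion in $\rea a_n$, and your remark that the displayed ratio is $\geq 1$ reflects a typo in~(\ref{dBN}) (that expression is actually $(1-d_{\B^N}^2)^{-1}$, as the paper's own proof makes clear), which is harmless since you correctly aim for the limit $1$.
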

\begin{proof}
Denote $Z_n=(z_n,w_n)\in \C \times\C^{N-1}$. Since ${Z_n}$ is restricted, it is special, i.e.,
\begin{align}
\lim_{n\to\infty}\frac{\|w_n\|^2}{1-|z_n|^2}=0\label{special}
\end{align}
and projections on the first dimension ${z_n}$ tend to $1$ non-tangentially. Moreover, by Lemma \ref{thm:ang_lder},
\begin{align}
\lim_{n\to\infty}\frac{1-z_{n+1}}{1-z_n}=1.\label{radialder}
\end{align}
By (\ref{dBN}), the pseudo-hyperbolic distance satisfies

\begin{align*}
1-d^2_{\B^N}(Z_n,Z_{n+1})&=\frac{(1-\left\|Z_n\right\|^2)(1-\left\|Z_{n+1}\right\|^2)}{\left|1-\left\langle Z_n,Z_{n+1}\right\rangle\right|^2}\\ &=\frac{(1-|z_n|^2-\|w_n\|^2)(1-|z_{n+1}|^2-\|w_{n+1}\|^2)}{\left|1-z_n\overline{z_{n+1}}-\left\langle w_n,w_{n+1}\right\rangle\right|^2}\\
&=\frac{(1-\frac{\|w_n\|^2}{1-|z_n|^2})(1-\frac{\|w_{n+1}\|^2}{1-|z_{n+1}|^2})}{\left|\frac{1-z_n\overline{z_{n+1}}}{\sqrt{1-|z_n|^2}\sqrt{1-|z_{n+1}|^2}}-\left\langle \frac{w_n}{\sqrt{1-|z_n|^2}},\frac{w_{n+1}}{\sqrt{1-|z_{n+1}|^2}}\right\rangle\right|^2}
\end{align*}

By (\ref{special}), it is enough to show that 

\begin{align*}
\left|\frac{1-z_n\overline{z_{n+1}}}{\sqrt{1-|z_n|^2}\sqrt{1-|z_{n+1}|^2}}\right|\to 1
\end{align*}

which is equivalent to $d(z_n,z_{n+1})\to 0$.

By definition,
\begin{align*}
d(z_n,z_{n+1})=\left|\frac{z_{n+1}-z_n}{1-\overline{z_n}z_{n+1}}\right|=\left|\frac{1-z_n-1+z_{n+1}}{1-\overline{z_n}+\overline{z_n}-\overline{z_n}z_{n+1}}\right|=\left|\frac{1-\frac{1-z_{n+1}}{1-z_n}}{\frac{1-\overline{z_n}}{1-z_n}+\overline{z_n}\frac{1-z_{n+1}}{1-z_n}}\right|
\end{align*}

By (\ref{radialder}), it is enough to show that the denominator is bounded away from $0$, which is indeed the case when $\frac{1-\overline{z_n}}{1-z_n}$ is bounded away from $-1$.

But we have
\begin{align*}
\Arg\left(\frac{1-\overline{z_n}}{1-z_n}\right)=-2\Arg(1-z_n)\geq -\pi+\epsilon,
\end{align*}
for some $\epsilon>0$, because $z_n\to 1$ non-tangentially, and thus $\frac{1-\overline{z_n}}{1-z_n}$ stays away from $-1$.
\end{proof}

\begin{remark}
Since any non-tangential approach must be restricted (Lemma \ref{spec_tang_Kor}), it follows that every non-zero-step sequence must converge tangentially, and Theorem \ref{claim:zerostep} is a generalization of the classical one-dimensional result (Remark 1, \cite{Po}). 
\end{remark}

\bibliographystyle{amsplain}
\bibliography{parabolic_ref}
\end{document}